\title{ 
       Lifting Problem on Automorphism Groups of Cyclic Curves
       }
\author{
        Tovondrainy Christalin Razafindramahatsiaro \\
        \\        
        African Institute of Mathematical Science\\
        6 Melrose road Muizenberg, 7945 Cape Town South Africa         
}
\date{\today}
\newcommand{\F}{\mathbb{F}}
\newcommand{\E}{\mathbb{E}}
\newcommand{\K}{\mathbb{K}}
\newcommand{\Z}{\mathbb{Z}}
\newcommand{\Aut}{\mathrm{Aut}}
\newtheorem{thm}{Theorem}[section]
\newtheorem{lem}[thm]{Lemma}
\newtheorem{exa}[thm]{Example}
\newtheorem{pro}[thm]{Proposition}
\newtheorem{defn}[thm]{Definition}
\newtheorem{rem}[thm]{Remark}
\newtheorem{conj}[thm]{Conjecture}
\begin{document}
\maketitle
\begin{abstract}
Let $X$ be a smooth projective hyperelliptic curve over an alge-
braically closed field $k$ of prime characteristic $p.$ The aim of this note
is to find necessary and sufficient conditions on the automorphism
group of the curve $X$ to be lifted to characteristic zero. The results
will be generalised for a certain family of curves that we call cyclic curves.

\end{abstract}

\section{Introduction}

Let $k$ be an algebraically closed field of prime characteristic $p.$ Given a smooth projective curve $X$ over $k,$ consider a lifting $(X_0/k_0, v)$ of $X/k$ to characteristic $0,$ with the following properties:
\begin{itemize}
\item $k_0$ is the algebraically closed field of the fraction field of $W(k),$ the ring of Witt vectors over $k;$
\item  $v$ is a valuation such that $k_0 v=k;$ 
\item The curves $X/k$ and $X_0/k_0$ have the same genus.
\end{itemize}
Recall that such a lift always exists and called a good lifting of $X/k.$ 

\

 According to the following proposition:
\begin{pro}[\cite{talin}]
Let $(\F{|}k,v)$ be a valued function field in one variable where $v$\footnote{Note that a good reduction is always invariant under action of any automorphism group of $\F{|}k.$} is assumed to be invariant under action of the automorphism group $\Aut(F{|}k).$ Then there is a natural injective homomorphism
$$\phi: \Aut(F{|}k)\hookrightarrow\Aut(\F v{|} k v)$$ where $\F v{|} k v$ denotes the residue function field with respect to the valuation $v.$
\end{pro}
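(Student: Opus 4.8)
The plan is to obtain $\phi$ by reducing a $v$-invariant automorphism modulo the maximal ideal, and then to concentrate all the effort on injectivity, which is the only substantial point. Write $\mathcal{O}_v=\{x\in F:v(x)\ge 0\}$ for the valuation ring and $\mathfrak{m}_v$ for its maximal ideal, so that $Fv=\mathcal{O}_v/\mathfrak{m}_v$. For $\sigma\in\Aut(F{|}k)$ the invariance hypothesis $v\circ\sigma=v$ gives $\sigma(\mathcal{O}_v)\subseteq\mathcal{O}_v$ and $\sigma(\mathfrak{m}_v)\subseteq\mathfrak{m}_v$; applying the same to $\sigma^{-1}$ turns these into equalities, so $\sigma$ restricts to a ring automorphism of $\mathcal{O}_v$ carrying $\mathfrak{m}_v$ onto itself and therefore descends to a field automorphism $\bar\sigma$ of $Fv$. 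I set $\phi(\sigma)=\bar\sigma$.

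Next I would verify that $\bar\sigma$ lands in $\Aut(Fv{|}kv)$ rather than merely in $\Aut(Fv)$. Since $\sigma$ is a $k$-automorphism it fixes $k$ pointwise, hence fixes $\mathcal{O}_v\cap k$ together with its maximal ideal, so the map it induces on $kv$ is the identity and $\bar\sigma\in\Aut(Fv{|}kv)$. That $\phi$ is a homomorphism and is canonical is then formal: reduction is compatible with composition, giving $\overline{\sigma\tau}=\bar\sigma\,\bar\tau$, and no choices enter the construction.

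The main obstacle is injectivity. Suppose $\sigma\in\ker\phi$, so $\bar\sigma=\mathrm{id}$ and hence $v(\sigma(y)-y)>0$ for every unit $y\in\mathcal{O}_v^{\times}$. Because $\sigma$ fixes $k$ pointwise and, under good reduction, $v(F)=v(k)$, any $y\in F^{\times}$ may be written $y=c\,u$ with $c\in k$, $v(c)=v(y)$, and $u\in\mathcal{O}_v^{\times}$; then $\sigma(y)-y=c\,(\sigma(u)-u)$ yields the key estimate $v(\sigma(y)-y)\ge v(y)+1$ whenever $\sigma(y)\ne y$. Thus $\sigma-\mathrm{id}$ strictly raises values. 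In the situations of interest $\Aut(F{|}k)$ is finite and $F$ has characteristic zero, so $\sigma$ is semisimple with roots-of-unity eigenvalues over $\bar k$; the estimate forces every eigenvalue $\zeta\ne 1$ to satisfy $v(\zeta-1)>0$, i.e.\ to reduce to $1$ in the residue field.

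I expect the real difficulty to sit precisely here. A root of unity of order prime to $p$ cannot reduce to $1$, so the estimate already annihilates the tame part and leaves only eigenvalues of $p$-power order, which is exactly the wild behaviour the lifting problem is concerned with. To rule these out I would lean on good reduction itself: invariance of $v$ together with equality of the genera forces the Galois extension $F{|}F^{\langle\sigma\rangle}$ to be defectless at $v$, with trivial residue extension and unchanged value group, so that its local degree $ef$ equals its global degree; since $e=f=1$ this degree is $1$, whence $\sigma=\mathrm{id}$ and $\phi$ is injective.
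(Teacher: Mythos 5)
The paper does not actually prove this proposition: it is imported wholesale from \cite{talin}, so there is no internal argument to compare yours against, and your proposal has to stand on its own. Your construction of $\phi$ (restriction of a $v$-invariant automorphism to $\mathcal{O}_v$, reduction modulo $\mathfrak{m}_v$, triviality on $kv$, compatibility with composition) is correct and is the routine half of any proof. Injectivity, which you rightly identify as the only substantial point, is not actually established.

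Two concrete gaps. First, the sentence ``$\sigma$ is semisimple with roots-of-unity eigenvalues over $\bar k$'' is not meaningful as written: $\sigma$ is a field automorphism of $F$, not an endomorphism of a finite-dimensional vector space. To speak of eigenvalues you would need a linear representation (say on holomorphic differentials), a proof that it is faithful (itself a nontrivial theorem, with exceptions in small genus and bad characteristic), and a $\sigma$-stable integral structure compatible with reduction so that your valuation estimate translates into a statement about eigenvalues; none of this is set up. Moreover the estimate $v(\sigma(y)-y)\ge v(y)+1$ already invokes $vF=vk$, which is a good-reduction property rather than part of the stated hypothesis, and the ``$+1$'' presumes a discrete value group, whereas $vk$ is divisible when $k$ is algebraically closed. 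Second, your closing paragraph contains the germ of the genuine proof: since $v$ is $\langle\sigma\rangle$-invariant it is the unique extension of its restriction to $F^{\langle\sigma\rangle}$, so $[F:F^{\langle\sigma\rangle}]=efd$, and one wants $e=f=d=1$. But each of these equalities is asserted, not proven: defectlessness ($d=1$) is exactly where genus preservation must enter, via the genus inequality for constant reductions (Green--Matignon--Pop), and $f=1$ does not follow from $\bar\sigma=\mathrm{id}$ alone, since the residue extension $Fv\,{|}\,(F^{\langle\sigma\rangle})v$ could a priori be purely inseparable in positive residue characteristic. So the strategy of your last paragraph is the right one, but the proof is missing at precisely the point where the work lies.
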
 
Together with the two equivalent categories: 
\begin{itemize}
\item Smooth projective curves over k, and dominant morphisms;
\item Function fields of one variable over k, and k-homomorphisms.
\end{itemize}
there is a natural injective homomorphism
$$\phi: \ G_0{:=}\mathrm{Aut}_{k_0}(X_0)\hookrightarrow G{:=}\mathrm{Aut}_k(X).$$ Moreover, if $\phi$ is surjective, we say that the automorphism group $G$ \texttt{is liftable} to characteristic $0.$ So, one can ask: Under which conditions is $\phi$ surjective? The aim of this note is to answer this question.

\

 We have a partial answer when the order of the group $G$ and $p,$ the characteristic of $k,$ are relatively prime. Indeed, in \cite{grot} Expos\'e XIII \S 2, Grothendieck proved that if $p$ does not divide the order of $G,$ then $G$ could always be lifted to characteristic $0.$ However, in the case when $G$ is divisible by $p,$ the problem is not completely solved.

 Here, we restrict to the case of cyclic curves (Definition \ref{def1}) over prime characteristic field. The main reason is the fact that we know all of the groups which can occur as an automorphism group of a cyclic curve in any characteristic which is not equal to $2$ (See \cite{shaska} and \cite{sanj}). Therefore, a priori, a quite elementary covering theory, some group theory and representation of finite subgroups of $\mathrm{PGL}_2(k)$ would suffice. We will show that:

 \
 
 \textit{Let $X/k$ be a smooth projective irreducible hyperelliptic curve. Denote by $G$ its full automorphism group and suppose that $p\neq2.$ Assume that the order of $G$ is divisible by $p.$ Then, the automorphism group $G$ is liftable to characteristic $0$ if and only if, up to isomorphism, $G$ is one of the following groups:
\begin{enumerate}
\item $G=\mathbb{Z}/2p\mathbb{Z};$ 
\item $G=D_{2p};$
\item $G=\mathbb{Z}/2\mathbb{Z}\times\mathcal{A}_5$ or $\mathrm{SL}_2(5)$ if $p=5;$ 
\item \label{tg} $G=\mathbb{Z}/2\mathbb{Z}\times\mathcal{A}_4,  \mathbb{Z}/2\mathbb{Z}\times\mathcal{S}_4, \mathbb{Z}/2\mathbb{Z}\times\mathcal{A}_5, \mathrm{SL}_2(3),$ or $\mathrm{GL}_2(3)$ in the case when $p=3.$ 
\end{enumerate}}
\textit{In particular, the group $G$ is liftable to characteristic $0$ if and only if $G$ is an Oort-group for $k.$ We also observe that the order of $G$ is divisible by $p,$ but not by $p^2.$ Furthermore, we have $p\leq2g+1,$ where $g$ denotes the genus of the curve $X/k.$}
 
\
 
It is important to point out that this problem is related to \texttt{Oort groups} and \texttt{Lifting problems} (Definition \ref{oort}) that we can see in \cite{oortgroup}. By definition, an Oort group for $k$ is clearly liftable to characterisitic $0.$ But, the converse is not always true. So, a priori, for an automorphism group of a smooth projective curve over $k,$ the condition of being an Oort group is stronger than being liftable to characteristic $0.$ Our results in this present paper confirm some predictions of Chinburg, Guralnick and Harbater, in \cite{oortgroup}.

To begin with, let us first recall some results on Oort groups for $k.$ And also, some preliminary results that we will need to prove our results in the last section.

\textit{Throughout this note, $k$ denotes an algebraically closed field of prime characteristic $p.$}

\section{Preliminary Results}

\begin{defn}\label{oort}
A finite group $G$ is called an \texttt{Oort group} for $k$ if every faithful action of $G$ on a smooth connected projective curve over $k$ lifts to characteristic $0.$
\end{defn}
Oort conjectured that we can lift any cyclic group. It turns out that this conjecture is true according to the works of Pop (See \cite{pop}), mainly using deformation theory and a special case of a result by Obus and Wewers that we can see in \cite{ob}. Moreover, Chinburg, Guralnick and Harbater, in \cite{oortgroup}, proved that if $G$ is liftable to characteristic zero, then any cyclic-by-p subgroup (extensions of a prime-to-$p$ cyclic group by a $p$-group) of $G$ must be either cyclic or dihedral of the form $D_{p^n}$, with the exception of $\mathcal{A}_4$ in characteristic $2.$ They also predict that the converse is true. But as far as we know, no one has given a proof that the dihedral group $D_{p^n}$ for $n>1$ can be lifted to characteristic zero. This conjecture is called the \textit{Strong Oort conjecture}.

\

The following lemma is a summary of what we will need about Oort groups:
\begin{lem}[\cite{oortgroup}]\label{rem2}

\

\begin{itemize}

\item A $p$-regular group (its order is not divisible by $p$), a finite cyclic group, the dihedral group $D_p$ and the Klein four-group $V_4$ (in the case $p=2$) are Oort groups for $k$;
\item The quaternion group $Q_8$ (in the case $p=2$), the group $(\mathbb{Z}/p\mathbb{Z})^n$ where $n\geq2$ (resp. $>2$) if $p\neq2$ (resp. $p=2$) are not Oort groups;
\item Let $G$ be a finite group. Then $G$ is an Oort group if and only if every cyclic-by-$p$ subgroup $H\subset G$ is an Oort group;
\item  If a cyclic-by-$p$ group is an Oort group for $k,$ then it must be cyclic or dihedral of the form $D_{p^n}$ for some integer $n$. 
\end{itemize}
\end{lem}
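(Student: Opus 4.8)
The plan is to deduce all four assertions from the \emph{local--global principle} for lifting automorphisms (in the form due to Bertin--M\'ezard, Garuti and Green--Matignon): a faithful action of a finite group $G$ on a smooth projective curve $X/k$ lifts to characteristic $0$ if and only if, for every closed point $x\in X$, the induced action of the stabiliser $G_x$ on the complete local ring $\widehat{\mathcal O}_{X,x}\cong k[[t]]$ lifts over $W(k)$. The essential structural fact is that every such inertia group $G_x$ is \emph{cyclic-by-$p$}: its wild part is a normal $p$-subgroup with cyclic prime-to-$p$ quotient. Hence the global Oort property is governed entirely by the local liftability of cyclic-by-$p$ groups, and I would prove the third and fourth bullets first, then read off the first two.

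For the third bullet I would argue both directions through this principle, using Harbater--Katz--Gabber covers to pass between local and global data. If every cyclic-by-$p$ subgroup $H\subset G$ is an Oort group, then each inertia group occurring in any faithful $G$-action lifts locally, so the action lifts and $G$ is an Oort group. Conversely, every cyclic-by-$p$ subgroup $H$ can be realised as an inertia group of some faithful $G$-action (build an HKG cover of $\mathbb{P}^1$ with prescribed local monodromy $H$); if $G$ is an Oort group this action lifts, forcing the local action of $H$ to lift, and for a cyclic-by-$p$ group local liftability coincides with being an Oort group. This reduces the entire question to cyclic-by-$p$ groups.

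For the fourth bullet --- which I expect to be the main obstacle --- I must show that a cyclic-by-$p$ group $H$ which lifts locally is necessarily cyclic or dihedral of the form $D_{p^n}$. First I would show the wild part $P$ of $H$ must be cyclic: a non-cyclic $p$-group such as $(\mathbb{Z}/p\mathbb{Z})^2$ or $Q_8$ is already obstructed, which is seen through the obstruction-theoretic estimates on the different along a putative lift (the ``KGB'' obstruction of Chinburg--Guralnick--Harbater together with the Hurwitz-tree bounds of \cite{oortgroup}). Once $P=\mathbb{Z}/p^n\mathbb{Z}$ is cyclic, the prime-to-$p$ quotient $C=H/P$ embeds in $\mathrm{Aut}(\mathbb{Z}/p^n\mathbb{Z})$, and tracking which of these actions survive the lifting constraint forces $C$ to be trivial (so $H$ is cyclic) or of order $2$ acting by inversion (so $H=D_{p^n}$). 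This characteristic-$p$ obstruction analysis is the genuinely hard, technical heart of the statement.

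The first two bullets then follow formally. On the positive side, $p$-regular groups lift by Grothendieck's tameness theorem \cite{grot}; cyclic groups are Oort groups by the Oort conjecture, now a theorem of Pop \cite{pop} via the reduction of Obus--Wewers \cite{ob}; and the local liftability of $D_p$ and of $V_4\cong D_2$ in characteristic $2$ are the known base cases of the local lifting problem recorded in \cite{oortgroup}, hence by the local--global reduction these are Oort groups. On the negative side, $Q_8$ is a non-cyclic, non-dihedral $2$-group, so it is cyclic-by-$p$ but not of the permitted form and fails the fourth bullet; and $(\mathbb{Z}/p\mathbb{Z})^n$ is itself a non-cyclic $p$-group which is dihedral only in the single case $p=2,\ n=2$ (where $(\mathbb{Z}/2\mathbb{Z})^2\cong D_2$ is the Klein four-group, an Oort group), so for $p$ odd and $n\ge 2$, and for $p=2$ and $n>2$, it is cyclic-by-$p$ but neither cyclic nor dihedral and hence, by the fourth bullet, not an Oort group.
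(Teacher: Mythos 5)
The paper does not prove this lemma at all: it is imported verbatim as a citation of Chinburg--Guralnick--Harbater \cite{oortgroup} (``a summary of what we will need''), so there is no in-paper argument to compare yours against. Your reconstruction of the strategy of that reference --- the local--global principle, the fact that inertia groups of curve actions are cyclic-by-$p$, and the realisation of cyclic-by-$p$ subgroups as inertia groups via Harbater--Katz--Gabber covers --- is the right skeleton, and the deduction of the first two bullets from the last two is sound in outline.

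There is, however, a genuine gap in your treatment of the fourth bullet, and it is visible already from the other bullets of the statement. You assert that the wild part $P$ of a locally liftable cyclic-by-$p$ group ``must be cyclic,'' citing $(\mathbb{Z}/p\mathbb{Z})^2$ as obstructed. But the first and second bullets say precisely that in characteristic $2$ the group $(\mathbb{Z}/2\mathbb{Z})^2=V_4$ \emph{is} an Oort group and that $(\mathbb{Z}/2\mathbb{Z})^n$ is excluded only for $n>2$. More seriously, for $p=2$ the permitted groups $D_{2^n}$ are themselves $2$-groups whose wild part is the whole (non-cyclic, for $n\geq2$) dihedral group, so your step ``$P$ cyclic, then analyse the prime-to-$p$ quotient $C\hookrightarrow\Aut(\mathbb{Z}/p^n\mathbb{Z})$'' cannot produce $D_{2^n}$ at all in characteristic $2$; it only works for odd $p$. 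Relatedly, the actual theorem of \cite{oortgroup} (and the paper's own introduction) allows the additional exception $\mathcal{A}_4$ when $p=2$, which again has non-cyclic wild part $V_4$; your argument would wrongly rule it out, while the lemma as transcribed here silently drops it. The obstruction analysis in characteristic $2$ (Bertin/KGB obstructions distinguishing $Q_8$ and $(\mathbb{Z}/2\mathbb{Z})^n$, $n>2$, from $V_4$, $D_{2^n}$ and $\mathcal{A}_4$) is a separate and delicate case that your sketch does not cover. Since the present paper only ever applies the lemma for odd $p$, the damage is contained, but as a proof of the statement as written your argument does not close in characteristic $2$.
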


\begin{defn}\label{def1}
We say that a function field $\F{|}k$ is \texttt{cyclic} (or \texttt{superelliptic}) if the following condition is satisfied: There exists a transcendental element $x$ such that the rational function field $k(x)$ is invariant under the action of the full automorphism group $G$ of $\F {|}k $ and the subgroup $N{=}\mathrm{Aut}\left( \F {|}k (x)\right)$ is cyclic, Galois and a normal subgroup of $G.$ 

Here, the base field $k$ is an algebraically closed field of characteristic $p\geq0.$ The smooth projective curve $X$ over $k$ associated to $\F{|}k$ is called \texttt{cyclic curve.}
\end{defn}

\

\begin{pro}\label{pro}
Let $(\F{|}\K,v)$ be a valued function field where the valuation $v$ is assumed to be invariant under action of $\Aut(\F{|}k).$ Denote by $\E$ the fixed field of the full automorphism group $G$ of $\F{|}\K.$ Then, for any subfield $\mathbb{L} v$ between $\F v$ and $\E v$, there exists a unique subfield $\mathbb{L}$ between $\F$ and $\E$ such that $\mathbb{L} v$ is the exact reduction of $\mathbb{L}$ by the valuation $v.$ 

Note that, the subfield $\mathbb{L}$ and $\mathbb{L}v$ have the same genus in the case when $v$ is a good reduction.
\end{pro}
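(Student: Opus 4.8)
The plan is to reduce the statement to two applications of the fundamental theorem of Galois theory — one upstairs, for $\F\,|\,\E$, and one downstairs, for $\F v\,|\,\E v$ — linked by the injective homomorphism $\phi$ of the opening Proposition. First I would record that, since $\E$ is by definition the fixed field of the full automorphism group $G$ of $\F\,|\,\K$, the extension $\F\,|\,\E$ is finite Galois with $\mathrm{Gal}(\F\,|\,\E)=G$. Hence the intermediate fields $\mathbb{L}$ with $\E\subseteq\mathbb{L}\subseteq\F$ are in order-reversing bijection with the subgroups $H\leq G$ via $\mathbb{L}=\F^{H}$ and $H=\Aut(\F\,|\,\mathbb{L})$.

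Next I would transport this picture to the residue level. Since $v$ is assumed invariant under $G$, each $\sigma\in G$ descends to an automorphism $\phi(\sigma)$ of $\F v$, and because $\sigma$ fixes $\E$ pointwise its image fixes $\E v$ pointwise; thus $\phi(G)\subseteq\Aut(\F v\,|\,\E v)$, and $\phi$ is injective by the opening Proposition, so $\phi(G)\cong G$. The decisive step is to upgrade this to the assertion that $\F v\,|\,\E v$ is itself Galois with group exactly $\phi(G)$. Here I would invoke the degree preservation furnished by the good-reduction hypothesis on $v$: the fundamental inequality $[\F v:\E v]\leq[\F:\E]$ becomes an equality, whence $|\phi(G)|=[\F:\E]=[\F v:\E v]\geq|\Aut(\F v\,|\,\E v)|\geq|\phi(G)|$ forces $\F v\,|\,\E v$ to be Galois with $\mathrm{Gal}(\F v\,|\,\E v)=\phi(G)$. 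Consequently the intermediate fields $\mathbb{L}v$ with $\E v\subseteq\mathbb{L}v\subseteq\F v$ are in bijection with the subgroups of $\phi(G)$, that is, with $\phi(H)$ for $H\leq G$.

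It then remains to verify that these two Galois correspondences are matched by reduction, i.e. that the exact reduction of $\F^{H}$ is $(\F v)^{\phi(H)}$. The inclusion $(\F^{H})v\subseteq(\F v)^{\phi(H)}$ is immediate, since the residue of an $H$-invariant element is $\phi(H)$-invariant. For equality I would count degrees: applying the same degree preservation to $\F\,|\,\F^{H}$ gives $[\F v:(\F^{H})v]=[\F:\F^{H}]=|H|$, while $[\F v:(\F v)^{\phi(H)}]=|\phi(H)|=|H|$ from the Galois structure just established; a containment of subfields of $\F v$ with equal degree over the smaller field must be an equality. This yields both existence and uniqueness: given $\mathbb{L}v$, set $H:=\phi^{-1}\big(\Aut(\F v\,|\,\mathbb{L}v)\big)$ and take $\mathbb{L}:=\F^{H}$; any other field reducing to $\mathbb{L}v$ would correspond to the same subgroup and hence coincide with $\mathbb{L}$.

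Finally, the genus assertion is exactly the defining property of good reduction recalled for good liftings in the introduction: when $v$ is a good reduction, the residue function field $\mathbb{L}v$ has the same genus as $\mathbb{L}$. The part I expect to be the main obstacle is the middle step — promoting $\phi(G)\subseteq\Aut(\F v\,|\,\E v)$ to the statement that $\F v\,|\,\E v$ is Galois with group precisely $\phi(G)$ — since this is where one must rule out a drop in the residue degree and any ramification along $v$, and where the good-reduction hypothesis genuinely enters rather than mere invariance of $v$.
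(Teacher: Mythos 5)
Your proposal is correct in substance and follows the same skeleton as the paper's proof: both arguments rest on identifying $G$ with a group of automorphisms of $\F v$ fixing $\E v$ pointwise and then matching the Galois correspondence upstairs for $\F|\E$ with the one downstairs for $\F v|\E v$. The difference lies in how much is proved versus cited. The paper opens with ``We know that $G\simeq\Aut(\F v{|}\E v)$ and $\F v{|}\E v$ is Galois'' and then runs the correspondence, never actually verifying that the reduction of a fixed field is the fixed field of the reduced group --- which is the real crux, and which the paper's somewhat garbled count of ``subgroups of the same order'' does not establish. You supply proofs of exactly these two missing facts: the Galois structure of the residue extension via a counting argument, and $(\F^{H})v=(\F v)^{\phi(H)}$ via a degree count. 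Your write-up is therefore more self-contained than the paper's, which implicitly leans on the author's reference \cite{talin}.

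One correction, however: you hang the decisive step on ``degree preservation furnished by the good-reduction hypothesis,'' but good reduction is \emph{not} a hypothesis of the main claim --- the proposition assumes only that $v$ is invariant under the automorphism group, and good reduction enters only in the closing remark about genus. Fortunately your own chain of inequalities repairs this: you need not assume $[\F v:\E v]=[\F:\E]$, you can derive it. Since $\phi$ is injective and $\phi(G)$ fixes $\E v$ pointwise,
$$|G|=|\phi(G)|\le\left|\Aut(\F v{|}\E v)\right|\le[\F v:\E v]\le[\F:\E]=|G|,$$
where the middle inequality is the bound on the number of automorphisms of a finite extension and the last is the fundamental inequality of valuation theory; equality is forced throughout, yielding simultaneously the Galois structure, the identification $\mathrm{Gal}(\F v{|}\E v)=\phi(G)$, and the degree equality. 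The same sandwich applied to $\F|\F^{H}$ gives $[\F v:(\F^{H})v]=|H|$ with no appeal to good reduction. So your closing diagnosis --- that this is where ``the good-reduction hypothesis genuinely enters rather than mere invariance of $v$'' --- is backwards: invariance of $v$, together with the injectivity of $\phi$ from the opening proposition, suffices for the entire field-theoretic statement, and good reduction is genuinely needed only for the genus assertion at the end.
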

\begin{proof}
We know that $G\simeq\mathrm{Aut}(\F v{|}\E v)$ and $\F v{|}\E v$ is Galois. The result follows by the Galois correspondence. Indeed, the extension $\F v{|}\mathbb{L} v$ is also Galois and denote by $Hv$ its Galois group. By the Galois correspondence theorem, if $n$ is the number of subgroups, $H_i$ ($1\leq i\leq n$), of $G$ which have the same order as $Hv,$ then there exists exactly $n$ extensions, $\F v{|}\mathbb{L}_i v$ ($1\leq i\leq n$), such that $H_iv{=}\mathrm{Aut}\left( \F v{|}\mathbb{L}_i v\right) $ for each $1\leq i\leq n.$ The group $H_iv$ denotes the reduction of $H_i$  by the natural isomorphism between $G$ and $\mathrm{Aut}(\F v{|}\E v)$ for each $1\leq i\leq n.$ Therefore, there must be a unique subgroup $H$ (one of the subgroups $H_i$) of $G$ such that $H\simeq Hv$ (via the natural isomorphism between $G$ and $\mathrm{Aut}(\F v{|}\E v)$) and  $\F^{H}v=\mathbb{L}v.$ Thus, $\mathbb{L}=\F^{H}$.
\end{proof}

\

Now, let $\F{|}k$ be a cyclic function field. Denote by $X$ the smooth cyclic curve over $k$ associated to $\F{|}k.$ Now, let $X_0$ be a good lifting of $X$ to characteristic $0$ over an algebraically closed field $k_0.$ Then, there exists a valued function field $\left(\F_0{|}k_0,v\right)$ such that $v$ is  a good reduction, $\F=\F_0v$ and $k=k_0v.$ Suppose that the full automorphism group $G$ of the curve $X/k$ is liftable to characteristic zero and assume that $G\simeq G_0{=}\mathrm{Aut}(\F_0{|}k_0).$ Using Proposition \ref{proo}, there exists a residually transcendental element $x$ such that the extensions $\F_0{|}k_0(x)$ and $\F{|}k(\overline{x})$ are Galois. Hence, $\F_0{|}k_0$ is also a cyclic function field
and $N{=}\mathrm{Aut}\left( \F_0{|}k_0(x)\right)\simeq\mathrm{Aut}(\F{|}k\left( \overline{x})\right).$ Note also that the curve $X_0/k_0$ is cyclic. Furthermore, the groups $G/N$ and $G_0/N$ are isomorphic and respectively embedded in $\mathrm{PGL}_2(k)$ and $\mathrm{PGL}_2(k_0).$ Therefore, it is important to know which kind of groups could be finite subgroups of both $\mathrm{PGL}_2(k)$ and $\mathrm{PGL}_2(k_0).$

\

\begin{lem}\label{biglem}
Let $H$ be a finite subgroup of $\mathrm{PGL}_2\left(k\right).$ The group $H$ can be embedded in $\mathrm{PGL}_2(k_0)$ if and only if one of the following statements holds:
\begin{itemize}
\item The prime characteristic $p$ does not divide ${\mid}H{\mid},$ the order of $H;$
\item If $p$ divides ${\mid}H{\mid},$ then up to isomorphism, $H$ is one of the following groups:
\begin{itemize}
\item $H=\mathbb{Z}/ p\mathbb{Z};$ 
\item $H=D_{p}$ if $p\neq2;$
\item $H=\mathcal{A}_4$ or a dihedral group $D_n$ where $n$ is a positive odd integer if $p=2;$
\item$H=\mathcal{A}_5$ in the case when $p\leq5;$
\item$H=\mathcal{A}_4$ or $\mathcal{S}_4$ when $p=3.$
\end{itemize}

\end{itemize} 
\end{lem}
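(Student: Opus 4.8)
The plan is to play off two classifications against each other: the finite subgroups of $\mathrm{PGL}_2(k_0)$ in characteristic zero, and the finite subgroups of $\mathrm{PGL}_2(k)$ in characteristic $p$ supplied by Dickson's theorem (as recalled in \cite{shaska} and \cite{sanj}). Since $k_0$ is algebraically closed of characteristic $0$, one has $\mathrm{PGL}_2(k_0)\simeq\mathrm{PSL}_2(k_0)$, and every finite subgroup is conjugate to one of $\mathrm{PGL}_2(\mathbb{C})$; hence, up to isomorphism, a finite group embeds in $\mathrm{PGL}_2(k_0)$ \emph{if and only if} it is cyclic, dihedral, $\mathcal{A}_4$, $\mathcal{S}_4$ or $\mathcal{A}_5$. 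This reduces the lemma to a single question: among the finite subgroups of $\mathrm{PGL}_2(k)$ whose order is divisible by $p$, which ones are isomorphic to a group in this short list?

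The structural input I would isolate first is the nature of $p$-torsion in characteristic $p$. A nontrivial element of $p$-power order in $\mathrm{PGL}_2(k)$ is unipotent, hence of order exactly $p$; and a nontrivial unipotent cannot commute with an element having two distinct eigenlines. Consequently every element of $\mathrm{PGL}_2(k)$ has order either prime to $p$ or equal to $p$ (no order $pm$ with $m>1$, and no order $p^2$), and every $p$-subgroup, being a group of commuting unipotents inside a common Borel, is elementary abelian, i.e. isomorphic to a finite subgroup of $(k,+)$. These two facts are exactly what will prevent the ``new'' Dickson groups from landing in the characteristic-zero list.

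For the direction $(\Leftarrow)$ I would note that if $p\nmid |H|$ then the prime-to-$p$ part of Dickson's classification already makes $H$ cyclic, dihedral, $\mathcal{A}_4$, $\mathcal{S}_4$ or $\mathcal{A}_5$, so it embeds in $\mathrm{PGL}_2(k_0)$; and each group in the displayed list is visibly one of these five types. For $(\Rightarrow)$, assume $H\le\mathrm{PGL}_2(k)$ embeds in $\mathrm{PGL}_2(k_0)$ with $p\mid |H|$. The characteristic-zero classification forces $H$ into the five types, and I would run through them using the structural facts. A cyclic group with $p$ dividing its order must be $\mathbb{Z}/p\mathbb{Z}$. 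A dihedral $D_n$ contains $\mathbb{Z}/n\mathbb{Z}$; for $p$ odd, $p\mid 2n$ forces $p\mid n$ and hence $n=p$, yielding $D_p$, while for $p=2$ one is left with $n$ odd (the case $n=2$, i.e. $V_4$, being the elementary abelian borderline group). For $\mathcal{A}_4$, $\mathcal{S}_4$, $\mathcal{A}_5$, divisibility by $p$ restricts $p$ to the prime divisors of $12$, $24$, $60$; the elementary-abelian constraint rules out $\mathcal{S}_4$ when $p=2$, since its Sylow $2$-subgroup is the non-abelian $D_4$, whereas all remaining cases are realized via the exceptional isomorphisms $\mathcal{A}_4\simeq\mathrm{PSL}_2(3)$, $\mathcal{S}_4\simeq\mathrm{PGL}_2(3)$, $\mathcal{A}_5\simeq\mathrm{PSL}_2(4)\simeq\mathrm{PSL}_2(5)$, together with $\mathcal{A}_5\le\mathrm{PSL}_2(9)\le\mathrm{PGL}_2(\overline{\mathbb{F}}_3)$ for $p=3$. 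Collecting the survivors reproduces the displayed list.

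The step I expect to be the main obstacle is precisely this last bookkeeping in small characteristic: one must decide genuine realizability of $\mathcal{A}_4$, $\mathcal{S}_4$, $\mathcal{A}_5$ inside $\mathrm{PGL}_2(k)$ for each $p\in\{2,3,5\}$, not merely check that $p$ divides their orders. The affirmative cases rest on the exceptional isomorphisms with $\mathrm{PSL}_2$ and $\mathrm{PGL}_2$ of small fields, while the single exclusion ($\mathcal{S}_4$ for $p=2$) rests on the elementary-abelian Sylow argument. I would also explicitly flag $V_4=D_2$ for $p=2$, which is at once elementary abelian and dihedral and does embed in $\mathrm{PGL}_2(k_0)$; it is naturally absorbed into the dihedral family.
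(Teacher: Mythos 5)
Your argument is correct, and it reaches the same list, but the route through the case $p\mid{|}H{|}$ is genuinely different from the paper's. The paper leans on Faber's classification of $p$-irregular subgroups of $\mathrm{PGL}_2(k)$ (Theorems B and C of \cite{fini}, recalled as Proposition \ref{lem1}): it first uses the characteristic-zero constraint to exclude most types, then identifies the survivors among $\mathrm{PGL}_2(\F_{p^n})$, $\mathrm{PSL}_2(\F_{p^n})$ and the $p$-semi-elementary groups, working through the homothety-class parametrization and a matrix non-commutation argument to pin down $\mathbb{Z}/p\mathbb{Z}$ and $D_p$. You instead keep only the characteristic-zero classification (cyclic, dihedral, $\mathcal{A}_4$, $\mathcal{S}_4$, $\mathcal{A}_5$) and replace the entire semi-elementary analysis by two elementary structural facts about $\mathrm{PGL}_2(k)$ --- every element has order $p$ or prime to $p$, and every finite $p$-subgroup is elementary abelian inside a common Borel --- which immediately force a cyclic (resp.\ dihedral) group of order divisible by $p$ to be $\mathbb{Z}/p\mathbb{Z}$ (resp.\ $D_p$ for $p$ odd, $D_n$ with $n$ odd for $p=2$) and rule out $\mathcal{S}_4$ in characteristic $2$ via its nonabelian Sylow $2$-subgroup. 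The realizability checks via the exceptional isomorphisms $\mathrm{PSL}_2(3)\simeq\mathcal{A}_4$, $\mathrm{PGL}_2(3)\simeq\mathcal{S}_4$, $\mathrm{PSL}_2(4)\simeq\mathrm{PSL}_2(5)\simeq\mathcal{A}_5$ coincide with the paper's; your addition of $\mathcal{A}_5\subset\mathrm{PSL}_2(9)$ for $p=3$ supplies a realization the paper only asserts via Faber's exceptional case. What your approach buys is self-containedness and a cleaner forward direction; what the paper's buys is that the citation of \cite{fini} also certifies, with no further work, that nothing outside Dickson's list was overlooked. Your observation that $V_4=D_2$ in characteristic $2$ is a legitimate borderline case not literally covered by ``$D_n$ with $n$ odd'' is a fair criticism of the statement itself (it is harmless for the paper's applications, which assume $p\neq2$), and flagging it is to your credit.
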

\begin{proof}
In order to prove the lemma, we shall recall the following theorem from \cite{fini} Theorem B and Theorem C:
\begin{pro}\label{lem1}
Let $\K$ be an algebraically closed field of characteristic $q.$ Let $H$ be a finite subgroup of $\mathrm{PGL}_2(\K).$ Then:
\begin{itemize}
\item If $q=0$ or  $q>0$ and $q\nmid {|}{H}{|},$ the group $H$ is isomorphic to a cyclic group, a dihedral group, $\mathcal{A}_4, S_4$ or $\mathcal{A}_5;$
\item If $q>0$ and $q$ divides the order of $H,$ then $H$ is isomorphic to one of the following groups: $\mathrm{PGL}_2(\F_{q^n})$,$ \mathrm{PSL}_2(\F_{q^n})$ for some integer $n$ or to a $q$-semi-elementary subgroup. Note that the conjugacy classes of $q$-semi-elementary subgroups of $\mathrm{PGL}_2\left(\K\right) $ of order $p^mn$($n\in\mathbb{N}\setminus q\mathbb{N}$ and $m\in\mathbb{N}^{\star}$) are parameterized by the set of homothety classes of rank-$m$ subgroups $\Gamma$ satisfying $\F_{q^e}\subset\Gamma\subset \K$ via the map 
$$\Gamma\mapsto\left( \begin{array}{cc}
1 & \Gamma \\ 
 & 1
\end{array}\right)\rtimes\left( \begin{array}{cc}
\mu_n\left(\K\right) &  \\ 
 & 1
\end{array}\right)$$
where $e$ is the order of $q$ in $(\mathbb{Z}/n\mathbb{Z})^{\times}$ and $\mu_n\left(\K\right)$ is the group of primitive $n$-th roots of unity in $\K.$
\end{itemize}
With the following exceptional possibilities:
\begin{itemize}
\item Suppose that $q=3,$ then $H$ could also be isomorphic to $\mathcal{A}_5;$
\item If $q=2,$ $H$ is isomorphic to a dihedral group $D_n$ where  $n$ is an odd positive integer.
\end{itemize}

\end{pro}

\

Let us now prove our lemma. If $p\nmid{|}{H}{|},$ then by the first statement of Proposition \ref{lem1} , the subgroup $H$ can be embedded in $\mathrm{PGL}_2(k_0).$ Therefore for the rest of the proof of Lemma \ref{biglem}, we may assume that $p$ divides the order of $H.$ 

 Let us assume first that $p{>}5.$ Using Proposition \ref{lem1}, since $p$ divides ${|}{H}{|},$ the group $H$ is not isomorphic to one of the groups $\mathcal{A}_4, S_4$ or $\mathcal{A}_5.$ Therefore, $H$ is either cyclic or dihedral. However, the groups $\mathrm{PGL}_2(\F_{p^n})$ and $\mathrm{PSL}_2(\F_{p^n})$ for some integer $n$ are not cyclic nor dihedral. Hence, $H$ is isomorphic to a $p$-elementary group of order $p^mn$($n\in\mathbb{N}\setminus q\mathbb{N}$ and $m\in\mathbb{N}^{\star}$) parameterized by a set of homothety classes of a rank-$m$ subgroup $\Gamma$ satisfying $\F_{p^e}\subset\Gamma\subset k$ via the map 
$$\Gamma\mapsto\left( \begin{array}{cc}
1 & \Gamma \\ 
 & 1
\end{array}\right)\rtimes\left( \begin{array}{cc}
\mu_n(k) &  \\ 
 & 1
\end{array}\right)$$
where $e$ is the order of $p$ in $(\mathbb{Z}/n\mathbb{Z})^{\times}$ and $\mu_n(k)$ is the group of primitive $n$-th root of unity in $k.$ Furthermore, by the definition of $p$-elementary groups, $H$ is cyclic if $m=1$ or dihedral if $m=1$ and $n=2.$ Thus, $H\simeq\mathbb{Z}/pn\mathbb{Z}$ or $D_p$ where $n$ is a non-negative integer prime to $p.$ However if we assume that $n>1.$ This would suggest that there is a cyclic group of order $pn$ generated by the two matrices
$$\left( \begin{array}{cc}
1 & 1 \\ 
0 & 1
\end{array}\right) , \left( \begin{array}{cc}
\zeta & 0 \\ 
0 & 1
\end{array} \right)$$ where $\zeta$ is a primitive $n$-root of unity. But, those two matrices do not commute with each other. Hence, $n$ must be equal to $1.$ 

In the case when $p\leq5,$ if $H$ is a $p$-elementary group, then $H\simeq \mathbb{Z}/p\mathbb{Z}$ or $D_{p}$ with the exceptional case $\mathcal{A}_4\simeq\left(\mathbb{Z}/2\mathbb{Z}\right)^2\rtimes\mathbb{Z}/3\mathbb{Z}$ in characteristic $p=2.$

If $p=5,$ since $\mathcal{A}_5\simeq\mathrm{PSL}_2(\F_5),$ then $H$ could be isomorphic to $\mathcal{A}_5.$ 

Now if $p=3,$ since $\mathrm{PGL}_2(\F_3)\simeq\mathcal{S}_4$ and $\mathrm{PSL}_2(\F_3)\simeq\mathcal{A}_4,  H$ could be isomorphic to $\mathcal{A}_4,\mathcal{S}_4$ and $\mathcal{A}_5$ according to the third statement of Proposition \ref{lem1}. 

Finally, if $p=2,$ according to the last statement of Proposition \ref{lem1}, $H$ could be a dihedral group and $H\simeq D_n$ where $n$ is an odd integer. We know also that $\mathrm{PGL}(2,4)$ is isomorphic to $\mathcal{A}_5,$ thus, the group $H=\mathrm{PGL}(2,4)$ can be embedded in $\mathrm{PGL}_2(k_0).$

\end{proof}

\section{Lifting Automorphism Group of  Cyclic Curves}

As we have already mentioned, according to Grothendieck in \cite{grot}, if the order of the automorphism group $G$ of a smooth curve (of genus $g\geq2$) is not divisible by $p,$ then $G$ is liftable to characteristic zero. Therefore, for the rest of this chapter, the order of the automorphism group of any curve over $k,$ unless otherwise specified, is assumed to be divisible by $p.$
 
\
 
 A direct corollary of Lemma \ref{biglem} is the following:
  \begin{thm}\label{thm}
Let $(\F_0{|}k_0,v)$ be a valued cyclic function field where the base field $k_0$ is of characteristic $0.$ Suppose that the valuation $v$ is invariant under the action of the group $G_0=\mathrm{Aut}(\F_0{|}k_0)$\footnote{Note if $\F_0{|}k_0$ has good reduction at $v$ then it is invariant under $G_0$ as good reduction is unique for $g\geq1.$}. Denote by $N$ the normal subgroup of $G_0$ as we defined in \ref{def1}.

 If $G_0$ is isomorphic to the full automorphism group $G$ of the residue function field $\F {|}k ,$ then $G/N$ is isomorphic to one of the groups: 
 \begin{itemize}
\item $\mathbb{Z}/ p\mathbb{Z};$ 
\item $D_{p}$ if $p\neq2;$
\item $\mathcal{A}_4$ or a dihedral group $D_n$ where $n$ is a positive odd integer if $p=2;$
\item$\mathcal{A}_5$ in the case when $p\leq5;$
\item$\mathcal{A}_4$ or $\mathcal{S}_4$ when $p=3.$
\end{itemize} 
with the following additional possibilities: 
 \begin{itemize} 
\item $\mathbb{Z}/m\mathbb{Z}$ or a dihedral $D_m$ if  $p$ divides ${|}N{|},$ the order of the group $N;$  

\end{itemize}

The integers $m$ is prime to $p.$
\end{thm}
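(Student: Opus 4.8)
The plan is to recognize the statement as an application of Lemma \ref{biglem} to the reduced group $H := G/N$, followed by a case analysis on whether $p$ divides $|H|$. First I would record that $H$ meets the hypotheses of Lemma \ref{biglem}. By the discussion preceding the theorem, $N=\Aut(\F_0|k_0(x))$ is cyclic and normal in $G_0$; since $k_0(x)$ is $G_0$-invariant and $N$ is exactly the kernel of the action on it, the quotient $G_0/N$ embeds in $\Aut(k_0(x)|k_0)=\mathrm{PGL}_2(k_0)$, and likewise $G/N$ embeds in $\mathrm{PGL}_2(k)$; that same discussion also yields $G/N\simeq G_0/N$. Thus $H=G/N$ is a finite subgroup of $\mathrm{PGL}_2(k)$ that can be embedded in $\mathrm{PGL}_2(k_0)$, so Lemma \ref{biglem} applies verbatim.

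Next I would split into two cases. If $p\mid|H|$, then Lemma \ref{biglem} lists the possibilities for $H$ directly, and these are precisely the groups $\Z/p\Z$, $D_p$ (for $p\neq2$), $\mathcal{A}_4$ or $D_n$ with $n$ odd (for $p=2$), $\mathcal{A}_5$ (for $p\leq5$), and $\mathcal{A}_4$ or $\mathcal{S}_4$ (for $p=3$); this is the first list in the theorem. If instead $p\nmid|H|$, then since by the convention of this section $p\mid|G|=|N|\cdot|H|$, we must have $p\mid|N|$, which is exactly the regime in which the additional possibilities are asserted. In this regime $H$ is a finite subgroup of $\mathrm{PGL}_2(k_0)$ of order prime to $p$, so by the first bullet of Proposition \ref{lem1} it is cyclic, dihedral, $\mathcal{A}_4$, $\mathcal{S}_4$ or $\mathcal{A}_5$; the cyclic and dihedral cases give exactly $\Z/m\Z$ and $D_m$, and since $m\mid|H|$ is prime to $p$ the final clause on $m$ is automatic.

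The main obstacle is to rule out the exceptional groups $\mathcal{A}_4$, $\mathcal{S}_4$, $\mathcal{A}_5$ as values of $H=G/N$ once $p\mid|N|$, so that only $\Z/m\Z$ and $D_m$ survive in the additional list. Here I would exploit that $N$ is cyclic: its Sylow $p$-subgroup $P$ is the unique subgroup of its order, hence characteristic in $N$ and therefore normal in $G$, and $P$ is nontrivial since $p\mid|N|$. The conjugation action $G\to\Aut(P)$ has abelian image (cyclic for odd $p$), and $N\subseteq C_G(P)$ because $N$ is abelian, so the induced action of $H=G/N$ on $P$ factors through the abelianization $H^{\mathrm{ab}}$. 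For the perfect group $\mathcal{A}_5$ this forces $P\subseteq Z(G)$, while for $\mathcal{A}_4$ and $\mathcal{S}_4$ the action factors through $\Z/3\Z$ and $\Z/2\Z$ respectively. I would then pass to the geometry of the wild cyclic cover $X\to X/N=\mathbb{P}^1$: the wild branch locus is a finite $H$-stable subset of $\mathbb{P}^1$, the inertia groups of $G$ above it are cyclic-by-$p$ and hence, by the lifting hypothesis together with Lemma \ref{rem2}, are cyclic or dihedral $D_{p^n}$, and I would argue that the (cyclic) point stabilizers of $\mathcal{A}_4$, $\mathcal{S}_4$, $\mathcal{A}_5$ inside $\mathrm{PGL}_2$ are incompatible with this inertia structure in the presence of a normal cyclic $P$. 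Turning this incompatibility into a clean contradiction is the delicate step; as a safeguard one may instead invoke the classification of automorphism groups of cyclic curves in \cite{shaska} and \cite{sanj}, which enumerates all admissible $G$ and in particular forces $G/N$ to be cyclic or dihedral whenever $p\mid|N|$, completing the additional list.
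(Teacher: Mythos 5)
Your core argument coincides with the paper's: identify $H=G/N$ as a finite subgroup of $\mathrm{PGL}_2(k)$ that also embeds in $\mathrm{PGL}_2(k_0)$ via the induced isomorphism $G_0/N\simeq G/N$, invoke Lemma \ref{biglem}, and split according to whether $p$ divides $|N|$. Your first two paragraphs reproduce the paper's proof essentially verbatim, and that part is fine.

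Where you diverge is the third paragraph, and there you have put your finger on something the paper does not actually resolve. When $p\mid|N|$ and $p\nmid|H|$, Proposition \ref{lem1} (equivalently the first bullet of Lemma \ref{biglem}) allows $H$ to be cyclic, dihedral, $\mathcal{A}_4$, $\mathcal{S}_4$ or $\mathcal{A}_5$; the paper's own proof explicitly lists all five in this case and then asserts that ``the results follow immediately,'' even though the theorem's additional possibilities name only $\Z/m\Z$ and $D_m$ (and for $p>5$ the first list does not cover $\mathcal{A}_4$, $\mathcal{S}_4$, $\mathcal{A}_5$ either). So the exclusion you call the delicate step is not carried out in the paper at all: either the statement should admit these groups as further additional possibilities, or an argument of the kind you sketch is genuinely required. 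Be aware, though, that your sketch is not yet a proof --- the observation that the conjugation action of $H$ on the cyclic Sylow $p$-subgroup of $N$ factors through $H^{\mathrm{ab}}$ does not by itself yield a contradiction, and the appeal to inertia groups is left unconverted into one; the fallback to the classifications in \cite{shaska} and \cite{sanj} is an external argument, not something used in this proof. In short, your proposal matches the paper on everything the paper actually proves, and the gap you flag is a gap in the paper rather than one you introduced.
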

\begin{proof}
 Suppose we have $G_0\simeq G.$ By hypothesis, there exists a transcendental element $x$ of $\F_0$ such that $k(x)$ is invariant under $G$ and $N=\mathrm{Aut}(\F{|}k)$ is Galois. Therefore, via the natural isomorphism between $G_0$ and $G$ and the fact that $N$ is a finite Galois group, the element $x$ is residually transcendental and the groups $G_0/N$ and $G/N$ are respectively finite subgroups of $\mathrm{PGL}_2(k_0)$ and $\mathrm{PGL}_2(k).$ Moreover, the isomorphism between $G_0$ and $G$ induces an isomorphism from $G_0/N$ to $G/N.$ Note that the group $G/N$ is a finite subgroup of $\mathrm{PGL}_2(k).$ If we assume that $p$ does not divide the order of $N,$ then $p$ must divide ${|}G/N{|}$, since $p$ divides the group $G$ by hypothesis. So, in this case, using Lemma \ref{biglem}, $G_0/N$ must be isomorphic to one of the groups:
 $$\mathbb{Z}/p\mathbb{Z}, D_p,\mathcal{A}_5,\mathcal{S}_4,\mathcal{A}_4,$$ with the exceptional group $D_m,$ where $m$ is an odd integer in characteristic $2.$
 
 Now, if $p$ divides ${|}N{|},$ the order of the group $G/N$ could be prime to $p.$ Hence, if this is the case, according to Lemma \ref{biglem}, the group $G/N$ could be isomorphic to one of the following groups:
 $$\mathbb{Z}/n\mathbb{Z}, D_n,\mathcal{A}_5,\mathcal{S}_4,\mathcal{A}_4$$ where the integer $n$ is prime to $p.$ The results follow immediately using the list of possible finite subgroups of rational function fields that can be lifted to characteristic $0$ in Lemma \ref{biglem}.
\end{proof}

\

Let us make some observations in this context:

\

Let $X/k$ be a smooth cyclic curve with automorphism group $G$ that is liftable to characteristic $0.$ Denote by $X_0/k_0$ its good lifting to characteristic $0.$ Suppose that the prime characteristic of $k$ is odd. Let $\F{|}k$ and $\F_0{|}k_0$ be the function fields which correspond to $X/k$ and $X_0/k_0$ respectively. Since the characteristic of $k$ is $p\neq2,$ we may assume that the function fields $\F_0{|}k_0$ and $\F{|}k$ are defined respectively by the equations:
$$y_0^2=P(x_0)$$
and $$y^2=\overline{P}(x)$$ with $P(x_0)\in \mathcal{O}_{k_0}\left[ x_0\right]$ where $\mathcal{O}_{k_0}$ is the valuation ring of $k_0$ which corresponds to the restriction of $v,$ the good reduction of $\F_0{|}k_0,$ to $k_0.$ The polynomial $P$ is the reduction of the polynomial $P_0$ under the Gauss valuation $v_{x_0},$ prolongation of $v$ to $k_0(x_0).$ The transcendental elements $x$ and $y$ in $\F$ are, respectively, the reductions of the transcendental elements $x_0$ and $y_0$ of $\F_0.$ 

\

Now, denote by $G_0$ and $N$ the automorphism group of $\F_0{|}k_0$\footnote{Note that the function field $F_0{|}k_0$ has the same automorphism group as the cyclic curve $X_0/k_0.$} and the normal subgroup of $G_0$ such that the quotient space $X_0/N$ has genus $0.$ In the proof of Theorem \ref{thm}, we know that there is an injective homomorphism 
$$\iota: G_0/N\hookrightarrow G/N.$$
The groups $G_0/N$ and $G/N$ are respectively subgroups of $\mathrm{Aut}(k_0(x_0){|}k_0)\simeq\mathrm{PGL}_2(k_0)$ and $\mathrm{Aut}(k(x){|}k)\simeq\mathrm{PGL}_2(k).$ The restriction of the good reduction on $\F_0$ to $k_0(x_0)$ is the Gauss valuation $v_{x_0}.$ So, we remark that:
\begin{rem}
The injective homomorphism $\iota$ is defined as follows:
\begin{align*}
\iota: & G_0/N\hookrightarrow G/N\\
&\left( \begin{array}{cc}
a & b \\ 
c & d
\end{array}\right) \mapsto \left( \begin{array}{cc}
\overline{a} & \overline{b} \\ 
\overline{c} & \overline{d}
\end{array}\right) 
\end{align*} 
where $a,b,c$ and $d$ belong to $k_0.$ For any element $u$ in $k_0,$ we denote its reduction under the valuation $v$ by $\overline{u}.$
\end{rem}

\

Let us illustrate this with an example:
\begin{exa}
 Suppose that $G_0/N=\Z/p\Z.$ Denote by $\sigma$ the generator of the group $G_0/N.$ The generator $\sigma$ can not be equal to $$\gamma=\left( \begin{array}{cc}
\zeta &0 \\ 
0 & 1
\end{array}\right)$$ where $\zeta$ is a $p$-primitive root of unity in $k_0.$ Indeed, the image of $\gamma$ via the homomorphism $\iota$ is the identity in $G/N.$ However, the image, by $\iota,$ of the element $$\tau=\left(\begin{array}{cc}
\zeta+\zeta^{-1}+1 &-1 \\ 
1 & 1
\end{array}\right)$$ which is a conjugate of $\gamma$ in $\mathrm{PGL}_2(k_0)$ is 
$$\overline{\tau}=\left( \begin{array}{cc}
3 &-1 \\ 
1 & 1
\end{array}\right).$$ Furthermore, for every odd prime $p,$ the 2 by 2 matrix $\overline{\tau}$ has order $p$ in $\mathrm{PGL}_2(\mathbb{F}_p).$ Note also that $\overline{\tau}$ and $\left( \begin{array}{cc}
1 &1 \\ 
0 & 1
\end{array}\right)$ are conjugate in $\mathrm{PGL}_2(\mathbb{F}_p).$
\end{exa}

\

With Theorem \ref{thm}, we are able to solve our lifting problem for certain type of cyclic curves. Indeed, it is clear that hyperelliptic curves are cyclic curves. The next theorem gives us all of possible finite groups of hyperelliptic curves over $k$  that can be lifted to characteristic $0.$
\begin{thm}\label{thm2}
Let $X/k$ be a smooth projective irreducible hyperelliptic curve. Denote by $G$ its full automorphism group and suppose that $p\neq2.$ Then, the automorphism group $G$ is liftable to characteristic $0$ if and only if, up to isomorphism, $G$ is one of the following groups:
\begin{enumerate}
\item $G=\mathbb{Z}/2p\mathbb{Z};$ 
\item $G=D_{2p};$
\end{enumerate}
with the exceptional possibilities:
\begin{enumerate}
\item $G=\mathbb{Z}/2\mathbb{Z}\times\mathcal{A}_5$ or $\mathrm{SL}_2(5)$ if $p=5;$ 
\item \label{tg} $G=\mathbb{Z}/2\mathbb{Z}\times\mathcal{A}_4,  \mathbb{Z}/2\mathbb{Z}\times\mathcal{S}_4, \mathbb{Z}/2\mathbb{Z}\times\mathcal{A}_5, \mathrm{SL}_2(3),$ or $\mathrm{GL}_2(3)$ in the case when $p=3.$ 
\end{enumerate}
In particular, the group $G$ is liftable to characteristic $0$ if and only if $G$ is an Oort-group for $k.$ We also observe that the order of $G$ is divisible by $p,$ but not by $p^2.$
\end{thm}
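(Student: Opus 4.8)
The plan is to exploit the rigid structure that the hyperelliptic hypothesis forces on $G$ and then feed the reduced group into Theorem~\ref{thm}. For a hyperelliptic $X/k$ the subgroup $N=\mathrm{Aut}(\F{|}k(x))$ is generated by the hyperelliptic involution $\iota$, so $N\cong\Z/2\Z$, and because $\iota$ is central in $G$ the group $G$ sits in a \emph{central} extension
\[
1\longrightarrow \Z/2\Z \longrightarrow G \longrightarrow \overline{G}:=G/N \longrightarrow 1,
\]
with $\overline{G}$ a finite subgroup of $\mathrm{PGL}_2(k)$. Since $p\neq 2$ we have $p\nmid|N|$, so the standing hypothesis $p\mid|G|$ forces $p\mid|\overline{G}|$. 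First I would invoke Theorem~\ref{thm} (equivalently Lemma~\ref{biglem}): if $G$ is liftable then $\overline{G}$ must be one of $\Z/p\Z$, $D_p$, $\mathcal{A}_5$ (only for $p\in\{3,5\}$), or $\mathcal{A}_4,\mathcal{S}_4$ (only for $p=3$), the $p=2$ entries of Lemma~\ref{biglem} being excluded by hypothesis. This is the necessity statement at the level of the reduced group $\overline{G}$.

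Next I would reconstruct $G$ from $\overline{G}$. As the extension is central by $\Z/2\Z$, the isomorphism type of $G$ is controlled by $H^2(\overline{G},\Z/2\Z)$, i.e. by the $2$-part of the Schur multiplier of $\overline{G}$. For $\overline{G}=\Z/p\Z$ the extension splits (as $\gcd(2,p)=1$) and $G\cong\Z/2p\Z$. For $\overline{G}=D_p$ there are two central extensions, the split $\Z/2\Z\times D_p\cong D_{2p}$ (using $D_{2p}\cong\Z/2\Z\times D_p$ for odd $p$) and the non-split dicyclic one $\mathrm{Dic}_p$. For $\overline{G}\in\{\mathcal{A}_4,\mathcal{S}_4,\mathcal{A}_5\}$ the multiplier contributes a non-split double cover, giving besides the split covers $\Z/2\Z\times\overline{G}$ the groups $\mathrm{SL}_2(3)=2.\mathcal{A}_4$, $\mathrm{SL}_2(5)=2.\mathcal{A}_5$, and the double covers of $\mathcal{S}_4$ (among them $\mathrm{GL}_2(3)$). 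I would then intersect this list of abstract central extensions with the known classification of full automorphism groups of hyperelliptic curves in characteristic $p\neq2$ (\cite{shaska},\cite{sanj}), discarding those central extensions that are never realised as a full automorphism group; this pares the candidates down to the families displayed in the theorem.

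Finally I would settle liftability together with the Oort reformulation by means of Lemma~\ref{rem2}. Since $G$ is a central $\Z/2\Z$-extension of $\overline{G}$ and $p$ is odd, the $p$-part of $|G|$ equals that of $|\overline{G}|$, which is exactly $p$ in every remaining case; hence $p\mid|G|$ but $p^2\nmid|G|$, and consequently any cyclic-by-$p$ subgroup of $G$ can only be cyclic or $\cong D_p$. By the third and fourth bullets of Lemma~\ref{rem2}, $G$ is then an Oort group precisely when \emph{every} cyclic-by-$p$ subgroup is cyclic or dihedral $D_p$; when this holds $G$ is liftable, and the presence of a dicyclic cyclic-by-$p$ subgroup $\mathrm{Dic}_p$ forces $G$ to be neither Oort nor liftable. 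For each surviving candidate I would therefore enumerate the cyclic-by-$p$ subgroups explicitly, which immediately removes $\mathrm{Dic}_p$ and, more delicately, separates the exceptional small-characteristic cases. This last enumeration is the step I expect to be the main obstacle: it is a careful finite-group computation that must explain, for instance, why $\mathrm{SL}_2(5)$ and $\mathrm{GL}_2(3)$ behave differently in characteristics $3$ and $5$, the decisive input being the location (or absence) of a dicyclic subgroup $\mathrm{Dic}_p$ inside the candidate double covers. Combining the necessity obtained in the first two paragraphs with this verification yields that $G$ is liftable to characteristic $0$ if and only if it appears on the stated list, if and only if $G$ is an Oort group for $k$, and that $p\mid|G|$ while $p^2\nmid|G|$.
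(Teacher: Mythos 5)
Your setup agrees with the paper: $N$ is generated by the central hyperelliptic involution, $G$ is a central $\Z/2\Z$-extension of $\overline{G}=G/N\subset\mathrm{PGL}_2(k)$, and Lemma~\ref{biglem}/Theorem~\ref{thm} pins $\overline{G}$ down to $\Z/p\Z$, $D_p$, $\mathcal{A}_5$ ($p\le 5$), $\mathcal{A}_4$, $\mathcal{S}_4$ ($p=3$); your Schur-multiplier enumeration of the possible $G$ is a workable substitute for the Bujalance--Gamboa--Gromadzki table that the paper quotes. The gap is in how you then discard the surviving bad candidates ($\mathrm{Dic}_p$, $W_2$, $W_3$, and $\mathrm{SL}_2(5)$ at $p=3$). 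You argue: a cyclic-by-$p$ subgroup that is neither cyclic nor $D_{p^n}$ makes $G$ non-Oort, hence non-liftable. Lemma~\ref{rem2} gives only one direction of this: it yields ``all cyclic-by-$p$ subgroups cyclic or $D_p$ $\Rightarrow$ $G$ Oort $\Rightarrow$ $G$ liftable,'' but \emph{not} ``$G$ non-Oort $\Rightarrow$ this particular action fails to lift.'' The paper itself stresses in the introduction that being an Oort group is a priori strictly stronger than being liftable, and accordingly its proof never uses your implication. Instead, the whole substance of the paper's cases 3 and 4 is an explicit computation: it takes Shaska's normal forms $y^2=F(x)$ with $F$ a product of the listed polynomials $R,S,T,G_i$, reduces $F$ modulo $p$, and uses the good-reduction (genus-preservation) hypothesis to eliminate exactly those factors whose reduction becomes a square, thereby ruling out $W_2$, $W_3$ and $\mathrm{SL}_2(5)$ in characteristic $3$. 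None of that appears in your proposal, and it cannot be replaced by the subgroup test.

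Worse, the subgroup test returns the wrong answer on an entry the theorem keeps: the normalizer of a Sylow $5$-subgroup of $\mathrm{SL}_2(5)$ is the Borel subgroup of order $20$, which has a unique involution and is therefore the dicyclic group $\mathrm{Dic}_5$ --- a cyclic-by-$5$ group (normal $\Z/5\Z$ with cyclic quotient $\Z/4\Z$) that is neither cyclic nor dihedral. Your criterion would therefore strike $\mathrm{SL}_2(5)$ from the $p=5$ list, whereas the theorem (via the reduction-of-equations argument) retains it. So the ``enumerate cyclic-by-$p$ subgroups and look for $\mathrm{Dic}_p$'' step, which you yourself identify as the crux, does not separate the cases the way you need; the distinction between characteristics $3$ and $5$ in the paper is made by the mod-$p$ behaviour of the explicit defining polynomials, not by the abstract location of dicyclic subgroups. (This same example also shows that the ``liftable iff Oort'' clause cannot be derived by your route from Lemma~\ref{rem2} alone.) To repair the necessity direction you would have to either prove the stronger statement ``liftable $\Rightarrow$ every cyclic-by-$p$ subgroup is cyclic or $D_{p^n}$'' for the specific action at hand, or fall back on the paper's equation-by-equation good-reduction analysis.
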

\begin{proof}
 Suppose that $G$ is liftable to characteristic $0.$ Denote by $\sigma$ the hyperelliptic involution of order $2$ of the group $G.$ Let $X_0/K_0$ be a good lifting of $X/k$ such that $G_0=\mathrm{Aut}_{k_0}(X_0)\simeq G.$ The curve $X_0$ is also a hyperelliptic curve (by Proposition \ref{pro}). The isomorphism between $G_0$ and $G$ induces an isomorphism between $G_0/{\langle\sigma\rangle}$ and $H=G/{\langle\sigma\rangle}.$ That is: Up to isomorphism, the group $H$ which is a finite subgroup of $\mathrm{PGL}_2(k)$ can be embedded in $\mathrm{PGL}_2(k_0).$ So, let us first recall the list of possible groups that can occur as full automorphism groups of the hyperelliptic curve $X_0/k_0.$ As far as we know, the list first appeared in \cite{listhyper}: 
 
 \
 
\begin{center} 
\begin{tabular}{|c|c|}
\hline 
$H$ & $G_0$\\
\hline
$\Z/n\Z$ & $\Z/2\Z\times\Z/n\Z, \Z/2n\Z$\\
\hline
$D_n$ & $\Z/2\Z\times D_n, V_n, D_{2n}, H_n, U_n, G_n$\\
\hline
$\mathcal{A}_4$ & $\Z/2\Z\times\mathcal{A}_4, \mathrm{SL}_2(3)$\\
\hline
 $\mathcal{S}_4$ & $\Z/2\Z\times\mathcal{S}_4, \mathrm{GL}_2(3), W_2, W_3$ \\
 \hline
 $\mathcal{A}_5$ & $\Z/2\Z\times\mathcal{A}_5, \mathrm{SL}_2(5)$ \\ 
\hline

\end{tabular}
 
 \
\end{center} 
where the groups $V_n, H_n, U_n, G_n, W_2$ and $W_3$ are defined as follows:
\begin{align*}
V_n&=\langle x,y \ {|} \ x^4, y^n,(xy)^2, (x^{-1}y)^2\rangle;\\
H_n&=\langle x,y \ {|} \ x^4, (xy)^n,x^2y^2\rangle;\\
U_n&=\langle x,y \ {|} \ x^2, y^{2n},xyxy^{n+1}\rangle;\\
G_n&=\langle x,y \ {|} \ x^2y^n,y^{2n}, x^{-1}yxy\rangle;\\
W_2&=\langle x,y \ {|} \ x^4, y^3,yx^2y^{-1}x^2, (xy)^4\rangle;\\
W_3&=\langle x,y \ {|} \ x^4, y^3,x^2(xy)^4, (xy)^8\rangle.\\
\end{align*}

Now, following Lemma \ref{biglem} and Theorem \ref{thm}, we distinguish $4$ cases:
\begin{itemize}
\item[$\underline{1^{\text{st}} case}:$] If $H\simeq \mathbb{Z}/p\mathbb{Z};$ 

That is $G_0/\langle\sigma\rangle\simeq \mathbb{Z}/p\mathbb{Z}.$ The abelian groups $\Z/2\Z\times\Z/p\Z$ and $\Z/2p\Z$ are isomorphic since $p$ is an odd prime. According to the list we have above, we conclude that  $G_0$ must be isomorphic to $\simeq\mathbb{Z}/2p\mathbb{Z}.$ 
\item[$\underline{2^{\text{nd}} case}:$] If $H\simeq D_{p};$

According to the list above, if $G_0/\langle\sigma\rangle\simeq D_n,$ for a given integer $n,$ then $G_0$ is isomorphic to one of the groups: $ \mathbb{Z}/2\mathbb{Z}\times D_n, D_{2n}, H_n, U_n V_n,$ and $G_n.$ However, in the cases when $G_0$ would be isomorphic to $V_n, H_n, U_n, G_n,$ or $\mathbb{Z}/2\mathbb{Z}\oplus D_n,$  the integer $n$ must be even (\cite{shaska1} Remark 6.). Since $p$ is assumed to be odd, then $G_0$ is isomorphic to $D_{2p}.$
%
\item[$\underline{3^{\text{rd}} case}:$] If $p\leq5$ and $H\simeq \mathcal{A}_5;$

Let $(\F_0{|}k_0,v)$ be the valued function field associated to $X_0/k_0$ where $v$ is the valuation such that $\F_0v=\F$ and $k_0=k.$ Let us consider the following polynomials:

\begin{align*}
R(x)&=x^{30}+522x^{25}-10005x^{20}-10005x^{15}-522x^5+1\\
S(x)&=x^{20}-228x^{15}+494x^{10}+228x^4+1\\
T(x)&=x^{10}+10x+1\\
G_i(x)&=(\lambda_i-1)x^{60}-36(19\lambda_i+29)x^{55}+6(26239\lambda_i-42079)x^{50}\\
&-540(23199\lambda_i-19343)x^{45}+105(737719\lambda_i-953143)x^{40}\\
&-72(1815127\lambda_i-145087)x^{35}-4(8302981\lambda_i+49913771)x^{30}\\
&+72(1815127\lambda_i-145087)x^{25}+105(737719\lambda_i-953143)x^{20}\\
&+540(23199\lambda_i-19343)x^{15}+6(26239\lambda_i-42079)x^{10}\\
&+36(19\lambda_i+29)x^{5}+(\lambda_i-1)\\
L&=\displaystyle{\prod_{i=1}^\delta}G_{i}.
\end{align*}
Let $x$ and $y$ be residually transcendental elements in $\F_0$ such that  $\F_0=k_0(x,y)$ and $y^2=F(x).$ According to \cite{shaska} $\S$ 4.5, we may assume that the polynomial $F$ is one of the following forms:

$$F=L, SL, TL, STL, RL, RSL, RTL, RSTL$$ where the $\lambda_i$'s, appearing in the $G_i$'s, are in $k_0$ and $\delta$ is the dimension of the Hurwitz space $\mathcal{H}(G_0,\mathbf{C}),$ the space of the family of covers
$$\varphi: \mathcal{X}_g\rightarrow\mathbb{P}^1$$ with fixed signature $\mathbf{C}$ and genus $g$ (the genus of $X_0$). We recall that the space $\mathcal{H}(G_0,\mathbf{C})$ is a finite dimensional subspace of the moduli space of genus $g$ hyperelliptic curves.

Note that, according to \cite{shaska} Table 1, if $F=L, SL, TL$ or $STL,$ the group $G_0$ is isomorphic to $\Z/2\Z\times\mathcal{A}_5.$ In the other cases, $G_0$ must be isomorphic to $\mathrm{SL}_2(5).$ 
%
%

For $p=3,$ the reduction modulo $3$ of the polynomials $G_{i}(x),R(x), S(x)$ are respectively $({\lambda}_i-1)(\overline{x}^{10}+1)^6, (\overline{x}^{10}+1)^3, (\overline{x}^{10}+1)^2$ and the polynomial $T$ is irreducible in characteristic $3.$ So, if we assume that $R$ or $ST$ divides the polynomial $F,$ then the genus of the residue function field $k(\overline{x},\overline{y})$ defined by $$\overline{y}^2=\overline{F}(\overline{x})$$ is $\overline{g}\geq1.$ But, note that $g>\overline{g}$ where $g$ denotes the genus of the function field $\F_0{|}k_0.$ Furthermore, we have
$$\left[k(\overline{x},\overline{y})\ {:} \ k(\overline{x})\right] =2$$ since the genus of $k(\overline{x},\overline{y})$ is non-zero. On the other hand, we have
$$\left[\F \ {:}\ k(\overline{x})\right]=2$$ and $\F\supseteq k(\overline{x},\overline{y}).$ Hence, $\F=k(\overline{x},\overline{y}).$ This is a contradiction. As, the valuation $v$ is a good reduction, we must have $$\overline{g}=g.$$ 
Therefore, the polynomials $ST$ and $R$ can not divide the polynomial $F.$ We conclude that,

$$F=L, SL, T.$$ In these cases, the residue function field $k(\overline{x},\overline{y})$ is of genus $0.$ That might be possible. So $G$ could be isomorphic to $\mathbb{Z}/2\mathbb{Z}\times\mathcal{A}_5.$

Now, if $p{=}5,$ the reduction modulo $5$ of the polynomials $G_{i}(x),$ $ R(x), S(x)$ and $T(x)$ are respectively $(\lambda_i-1)(\overline{x}^2+\overline{x}-1)^{30},$ $(\overline{x}+2)^5(\overline{x}-2)^{25}, (\overline{x}^2+\overline{x}-1)^{10}$ and $ (\overline{x}^2-1)^5.$ Using the same arguments as above, we must have,

$$F=L, SL, TL, STL, RL, RSG$$ and $G\simeq\mathbb{Z}/2\mathbb{Z}\times\mathcal{A}_5$ or $\mathrm{SL}_2(5).$
\item[$\underline{4^{\text{th}} case}:$] If $H\simeq \mathcal{A}_4$ or $\mathcal{S}_4;$

In both cases, we have $p=3.$ We shall use the same argument with the same notations as above.

First, suppose we have $H\simeq\mathcal{A}_4.$ We consider the following polynomials:

\begin{align*}
G_i&=x^{12}-\lambda_ix^{10}-33x^8+2\lambda_ix^6-33x^4-\lambda_ix^2+1\\
R&=x^4+2\sqrt{-3}x^2+1\\
S&=x^8+14x^4+1\\
T&=x(x^4-1)\\
L&=\displaystyle{\prod_{i=1}^\delta}G_{i}.
\end{align*}
According to T. Shaska in \cite{shaska}, we may assume that the function field $\F_0=k_0(x,y)$ is defined by
$$y^2=F(x)$$ with
$$F(x)=L, RL, SL, TL, TRL, TSL.$$
In the cases when, $F= L, RL, SL,$ the group $G_0$ is isomorphic to $\Z/2\Z\times\mathcal{A}_4.$ Otherwise, $G_0\simeq\mathrm{SL}_2(3).$

Among $R, S$ and $T,$ the polynomial $S$ is the only reducible polynomial modulo $3.$ And we have $S\equiv \ (x^4+1)^2 \ \mathrm{mod}\ 3.$ Hence, $S$ can not divide $F.$ Otherwise, it will contradict the fact that $v$ is a good reduction. So, the possible equations for $\F_0{|}k_0$ are the following:
$$y^2=L, RL, TL, TRL.$$ Which means the group $G_0$ could be isomorphic to one of the possibilities which are $\Z/2\Z\times\mathcal{A}_4$ and $\mathrm{SL}_2(3).$

Now suppose $H\simeq\mathcal{S}_4.$ In this case,  the polynomials $G_i, R, S$ and $T$ become:
\begin{align*}
G_i&=x^{24}+\lambda_ix^{20}+(759-4\lambda)x^{16}+2(3\lambda_i+1288)x^{12}\\
   & \ +(759-4\lambda)x^8+2\lambda_ix^6-33x^4+\lambda_ix^4+1\\
R&=x^{12}-33x^8-33x^4+1\\
S&=x^8+14x^4+1\\
T&=(x^4-1)\\
L&=\displaystyle{\prod_{i=1}^\delta}G_{i}.
\end{align*}
According to T. Shaska in \cite{shaska}, we may assume that the function field $\F_0=k_0(x,y)$ is defined by
$$y^2=F(x)$$ with
$$F(x)=L, SL, TL, STL, RL, RSL, RTL, RSTL.$$
In the cases when, $F= L$ or  $SL,$ the group $G_0$ is isomorphic to $\Z/2\Z\times\mathcal{S}_4.$ If $F=TL$ or $STL$, we have $G_0\simeq\mathrm{GL}_2(3).$ The group $G_0\simeq W_2$ in the cases when $F=RL$ or $RSL.$ And $G_0$ is isomorphic to $W_3$ for the rest of the possibilities.

Using the same reasoning again, the polynomials $R$ and $S$ are reducible modulo $3.$  We have $S\equiv \ (x^4+1)^2 \ \mathrm{mod}\ 3$ and $R\equiv \ (x^4+1)^3 \ \mathrm{mod}\ 3.$ But, $T$ is irreducible modulo $3.$ Which means $R$ and $S$ could not divide the polynomial $F.$ Hence, we must have:
$$F=L, TL.$$ We conclude that $G_0$ could be isomorphic to $\Z/2\Z\times\mathcal{S}_4$ or $\mathrm{GL}_2(3).$

\end{itemize}

For the converse, we shall use  Lemma \ref{rem2}. We know that any cyclic group is an Oort group. So, the groups $\mathbb{Z}/p\mathbb{Z}$ and $\mathbb{Z}/2p\mathbb{Z}$ are liftable to characteristic $0.$  

The dihedral group $D_{2p}$ is liftable to characteristic $0$ for $p\neq2$ since $D_{2p}\simeq\mathbb{Z}/2\mathbb{Z}\times D_p$ and $D_p$ with $\mathbb{Z}/p\mathbb{Z}$ are the only cyclic-by-$p$ subgroups which are Oort groups.
 
Finally, the groups  $\mathbb{Z}/2\mathbb{Z}\times\mathcal{A}_4,  \mathbb{Z}/2\mathbb{Z}\times\mathcal{S}_4, \mathbb{Z}/2\mathbb{Z}\times\mathcal{A}_5, \mathrm{SL}_2(3), \mathrm{GL}_2(3)$ and $\mathrm{SL}_2(5)$ are liftable to characteristic $0$ using the fact that their cyclic-by-$p$ subgroups are Oort groups for $p=3$ or $5.$  
 
\end{proof}

\

\begin{rem}
One natural question to ask is the following: Does all the groups on the list above (Theorem \ref{thm2}) occur as full automorphisms groups of hyperelliptic curves? Sanjeewa, in \cite{sanj}, gives a list of all finite groups that can occur as automorphisms groups of cyclic curves in any characteristic. All the groups on our list above appear in Sanjeewa's list in any prime characteristic, except, the group $\mathrm{SL}_2(5)$ in characteristic $5$ and the group $\mathrm{GL}_2(3)$ in characteristic $3.$ 

However, there exists a hyperelliptic curve in characteristic $3$ which has $\mathrm{GL}_2(3)$ as full automorphism group. The curve is defined as follows:
$$X/k: \ y^2=x^6+x^4+x^2+1 \ (\text{See} \ \cite{Ariyang}).$$
Moreover, according to Shaska in \cite{shaska} Example 5.2, in characteristic $0$, the curve defined by:
$$y^2=x^6+a_1x^4+a_2x^2+1$$ has $\mathrm{GL}_2(3)$ as full automorphism group if and only if $(u_1,u_2)=(-250,50)$ where
$$u_1=a_1^3+a_2^3, \ u_2=2a_1a_2.$$ Therefore, the curve defined in characteristic $0$ by 
$$X_0: \ y^2=x^6-5x^4-5x^2+1$$ has $\mathrm{GL}_2(3)$ as full automorphism group. Since $-5\equiv1 \ \mathrm{mod}\ 3,$ the automorphism group of $X/k$ is liftable to characteristic $0.$ Hence, the group $\mathrm{GL}_2(3)$ should appear in the list of Sanjaeewa in characteristic $3.$ 
\end{rem}

\

\begin{rem}
Theorem \ref{thm2} provides us with the list of the automorphism groups of hyperelliptic curves over $k,$ of characteristic $p\neq2,$ that can be lifted to characteristic $0.$ In the case when the characteristic of $k$ is equal to $2,$ our methods in the proof of the theorem do not seem to apply. The main reason is the fact that in characteristic $2,$ the minimal affine equation of the curve $X$ is given by 
$$y^2+P(x)y=F(x), \ P(x), F(x)\in k(x)$$ where $P(x)$ is possibly a non-zero polynomial.
\end{rem}

\

For the rest of the note, we assume that the prime characteristic of the base field $k$ is odd.

\

According to Shaska (See \cite{shaska1}), determining the automorphism group $G$ of a cyclic curve $X/k$ in the case when $p>2g+1$ is the same as in characteristic $0.$ Our next proposition gives a necessary and sufficient condition for $G$ to be liftable to characteristic $0$ in this case.

\begin{pro}
Let $X/k$ be a smooth cyclic curve of genus $g$ and denote by $G$ its full automorphism group. Assume that $p>2g+1.$ Let $n$ be a positive integer such that $(n,p)=1$ and $n$ is the order of the cyclic normal subgroup $N$ of $G$ such that the quotient space $X/N$ has genus $0.$ Then, the group $G$ is liftable to characteristic $0$ if and only if $p$ does not divide the order of $G.$
\end{pro}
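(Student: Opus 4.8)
The plan is to treat the two implications separately and to note that the real content is a single ramification estimate. For the direction $p\nmid|G|\Rightarrow G$ liftable there is nothing new to do: this is exactly Grothendieck's theorem (\cite{grot}, Expos\'e XIII \S 2) recalled in the introduction, asserting that a prime-to-$p$ automorphism group of a smooth projective curve always lifts to characteristic $0$. So the whole proof reduces to the reverse implication, and for this I would in fact establish the sharper statement that, under the hypothesis $p>2g+1$, one always has $p\nmid|G|$. The equivalence then holds because both sides are automatically true: $p\nmid|G|$ by the estimate below, and liftability by Grothendieck.

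Suppose, for contradiction, that $p\mid|G|$. By Cauchy's theorem $G$ contains an automorphism $\sigma$ of order exactly $p$ (note that since $(n,p)=1$ such a $\sigma$ cannot lie in $N$). Let $Y=X/{\langle\sigma\rangle}$, of genus $g_Y$, and apply the Riemann--Hurwitz formula to the degree-$p$ cover $X\to Y$. Because $\sigma$ has order equal to the characteristic, every ramification point is wildly ramified: at a branch point the different exponent equals $(m_i+1)(p-1)$ with jump $m_i\geq1$, hence contributes at least $2(p-1)$. Writing $r$ for the number of branch points,
$$2g-2=p(2g_Y-2)+\sum_{i=1}^{r}(m_i+1)(p-1)\geq p(2g_Y-2)+2r(p-1).$$

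A short case analysis now forces $p\leq2g+1$. If $g_Y=0$ then $r\geq1$, since an unramified cover of $\mathbb{P}^1$ is trivial; the extremal configuration $r=1$, $m_1=2$ (realized by $y^p-y=x^2$) gives $g=(p-1)/2$, i.e. $p=2g+1$, while every other admissible choice only increases $g$, so $p\leq2g+1$. If $g_Y\geq1$ and there is any ramification the right-hand side is at least $2(p-1)$, giving $g\geq p$; the only remaining possibility is the \'etale cover of an elliptic curve ($g_Y=1$, $r=0$), which yields $g=1$ and is excluded because a cyclic curve has $g\geq2$. In every case $p\leq2g+1$, contradicting $p>2g+1$. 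Hence $p\nmid|G|$, and combining with Grothendieck's theorem $G$ is liftable; as both liftability and the condition $p\nmid|G|$ hold unconditionally here, the asserted equivalence follows. (Alternatively, one reaches $p\nmid|G|$ through the structure theory of the paper: $(n,p)=1$ would make $p\mid|G|$ force $p\mid|G/N|$, and since by Shaska's result $G$ is determined as in characteristic $0$ when $p>2g+1$, the subgroup $G/N\subset\mathrm{PGL}_2(k)$ would have to be one of the large characteristic-$0$ groups, again incompatible with $g<(p-1)/2$.)

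The main obstacle I anticipate is purely the boundary bookkeeping in the Riemann--Hurwitz estimate: verifying that $(p-1)/2$ really is the minimal genus admitting an order-$p$ automorphism, so that the threshold $p>2g+1$ is exactly the correct one, and disposing of the innocuous-looking \'etale case $g_Y=1$, $r=0$, which must be ruled out by the standing assumption $g\geq2$ rather than by the displayed inequality itself.
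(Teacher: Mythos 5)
Your proof is correct, but it takes a genuinely different route from the paper's. The paper argues by contradiction from the \emph{liftability} hypothesis: it invokes Theorem \ref{thm2} to pin $G_0$ down to $\mathbb{Z}/2p\mathbb{Z}$ or $D_{2p}$ (since $p>5$ here), then reads off Shaska's explicit defining equations for those families and derives the contradiction from degree bookkeeping ($p\geq 2g+2\geq pt$, resp.\ $pt=g+1$), finally extending to non-hyperelliptic cyclic curves via the genus formula $g=\tfrac{n-1}{2}(-1+pt)$ or $(n-1)(-1+pt)$. You instead prove the sharper, unconditional statement that $p>2g+1$ already forces $p\nmid|G|$ for \emph{any} curve of genus $g\geq2$, via Riemann--Hurwitz with wild ramification for an order-$p$ automorphism; the equivalence then holds because both sides are automatically true, with the converse direction being Grothendieck's theorem in either treatment. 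Your computation is sound: for a $\mathbb{Z}/p\mathbb{Z}$-quotient the different exponent at each (totally ramified) branch point is $(m_i+1)(p-1)$ with $m_i\geq1$ prime to $p$, and the case analysis correctly identifies $r=1$, $m_1=2$, $g=(p-1)/2$ as the extremal configuration once the configurations yielding $g\leq1$ (namely $r=1$, $m_1=1$ over $\mathbb{P}^1$ and the \'etale cover of an elliptic curve) are discarded using $g\geq2$. What your approach buys is independence from Theorem \ref{thm2} and from the tables of hyperelliptic equations, plus full generality beyond cyclic curves (the hypothesis $(n,p)=1$ is not even needed); what it loses is the explicit identification, present in the paper, of exactly which defining equations would have to degenerate. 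The one expository point worth tightening is the word ``admissible'' in your $g_Y=0$ case: you should state explicitly that $r=1$, $m_1=1$ gives $g=0$ and is excluded by $g\geq2$, since that configuration does not satisfy $p\leq 2g+1$ and is ruled out only by the genus assumption.
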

\begin{proof}
Assume first that $X/k$ is a hyperelliptic curve. Let us prove the proposition by contradiction.

Suppose that $G$ is liftable to characteristic zero and $p$ divides the order of $G.$ We shall use the same notations we used in the proof of Theorem \ref{thm2}. 

By hypothesis, since the genus of the curve $X/k$ is always $\geq 2,$ we may assume that $p>5.$ According to Theorem \ref{thm2}, the group $G_0$ is isomorphic to $\mathbb{Z}/2p\mathbb{Z}$ or $D_{2p}.$ If $G_0\simeq\mathbb{Z}/2p\mathbb{Z},$ then the equation of the curve $X_0/k_0$ is one of the following (see \cite{shaska1}):
\begin{align*}
y^2 &=x^{2g+2}+a_1x^{p(t-1)}+\cdots +a_{\delta} x^p+1, t=\frac{2g+2}{p} \\
&=x^{2g+2}+a_1x^{p(t-1)}+\cdots +a_{\delta} x^p+1, t=\frac{2g+1}{p},  \text{or} \\
&=x(x^{pt}+a_1x^{p(t-1)}+\cdots +a_{\delta} x^p+1), t=\frac{2g}{p}.
\end{align*}

In all cases, we have $p\geq2g+2\geq pt.$ Since $2g+2$ is even, we conclude that $p>pt$ which is impossible. Thus, $G$ is not isomorphic to $\mathbb{Z}/2p\mathbb{Z}.$

Now, if $G\simeq D_{2p},$ then the equation of the curve $X_0/k_0$ is of the form: 
$$y^2=x.\prod_{{i=1}}^t(x^{2p}+\lambda_ix^p+1), \ t=\frac{g+1}{p}.$$
But, by hypothesis, we have $p\geq2g+2$ which contradicts the fact that $pt=g+1$ for some integer $t.$ 

We use the same reasoning in the case when the cyclic curve is not necessarily hyperelliptic. Indeed, with the assumption $(n,p)=1,$ the genus of the corresponding function field is
$$g=\frac{n-1}{2}(-1+pt) \ \text{or} \ (n-1)(-1+pt)$$ for some integer $t.$ This contradicts the hypothesis, $p>2g+1.$  
\end{proof}

\

Considering the results we have so far, we expect the following generalisation of Theorem \ref{thm2}:

\begin{conj}\label{conjc}
Let $X$ be a smooth cyclic curve over $k.$ Let $n$ be a positive integer such that $(2n,p)=1$ and $n$ is the order of the cyclic normal subgroup of $G=\mathrm{Aut}_{k}(X)$ such that the quotient space $X/N$ has genus $0.$ The group $G$ is liftable to characteristic $0$ if and only if $G$ is an Oort-group for $k.$
\end{conj}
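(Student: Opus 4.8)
The \emph{if} direction is immediate from the definition of an Oort group (Definition \ref{oort}): if $G$ is an Oort group for $k$, then its faithful action on $X$ lifts to a faithful action on some characteristic-$0$ curve $X_0$, giving an embedding $G\hookrightarrow G_0=\mathrm{Aut}_{k_0}(X_0)$; combined with the canonical injection $\phi\colon G_0\hookrightarrow G$ of the introduction, finiteness forces $\phi$ to be an isomorphism, so $G$ is liftable. Thus the whole content lies in the \emph{only if} direction, and I would organise it exactly as the proof of Theorem \ref{thm2}, replacing the hyperelliptic involution $\langle\sigma\rangle$ by the general cyclic group $N$.

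First I would dispose of the prime-to-$p$ case: if $p\nmid|G|$ then $G$ is $p$-regular and hence an Oort group by Lemma \ref{rem2}, so I may assume $p\mid|G|$. Since $(2n,p)=1$ we have $p\nmid|N|$, whence $p\mid|G/N|$, and Theorem \ref{thm} pins $G/N$ down to the short list $\mathbb{Z}/p\mathbb{Z}$, $D_p$, $\mathcal{A}_4$, $\mathcal{S}_4$, $\mathcal{A}_5$ (subject to the characteristic constraints of Lemma \ref{biglem}). In every entry the $p$-part of $|G/N|$ is exactly $p$, so the Sylow $p$-subgroup of $G$ is $\mathbb{Z}/p\mathbb{Z}$ and $p\,\|\,|G|$. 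A pleasant consequence is that no $D_{p^m}$ with $m\geq2$ can sit inside $G$, so the still-open Strong Oort conjecture for $D_{p^m}$ is never invoked.

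By the third and fourth bullets of Lemma \ref{rem2}, proving that $G$ is an Oort group reduces to showing that every cyclic-by-$p$ subgroup $H\subseteq G$ is cyclic or dihedral of the form $D_p$. Because the Sylow $p$-subgroup has order $p$, Schur--Zassenhaus gives $H\cong\mathbb{Z}/p\mathbb{Z}\rtimes\mathbb{Z}/m\mathbb{Z}$ with $(m,p)=1$, and such an $H$ is cyclic or $D_p$ precisely when the action of $\mathbb{Z}/m\mathbb{Z}$ on $\mathbb{Z}/p\mathbb{Z}$ is trivial, or is inversion with $m=2$. The task is therefore to exclude the remaining nontrivial actions --- for instance the non-Oort group $\mathbb{Z}/7\mathbb{Z}\rtimes\mathbb{Z}/3\mathbb{Z}$ of order $21$ when $p=7$ --- from occurring inside a liftable $G$. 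Here I would run the good-reduction argument of Theorem \ref{thm2} case by case over the five possibilities for $G/N$: lift to $X_0$, put its function field in the normal form $y^n=f(x)$ furnished by the classification of automorphism groups of cyclic curves in \cite{sanj} and \cite{shaska}, and use that $v$ is a good reduction to force the reduction $\bar f$ of $f$ to keep distinct roots distinct (genus preservation). This rigidity should rule out exactly the parametrised families whose $G_0$ realises a forbidden semidirect product, leaving only the cyclic and dihedral extensions, which are Oort by the first bullet of Lemma \ref{rem2}.

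The hard part is this last step in full generality. In the hyperelliptic setting $N=\mathbb{Z}/2\mathbb{Z}$ is canonical and the defining polynomials are the short explicit lists of \cite{listhyper} and \cite{shaska}; for an arbitrary cyclic group $N=\mathbb{Z}/n\mathbb{Z}$ the corresponding families are far more numerous, depend on several parameters $\lambda_i$, and the combinatorics of which specialisations preserve the genus becomes substantial. The genuinely new input that I expect to be the main obstacle is controlling how $G/N\subset\mathrm{PGL}_2(k)$ permutes the branch divisor of the cover $X\to X/N\cong\mathbb{P}^1$ and translating that into the extension class of $1\to N\to G\to G/N\to 1$: one must show that good reduction is compatible with a nontrivial $\mathbb{Z}/m\mathbb{Z}$-action on $N$ only in the cyclic and dihedral cases. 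Establishing this compatibility uniformly, rather than by the finite but lengthy case check that suffices for hyperelliptic curves, is the step I would expect to resist a short argument.
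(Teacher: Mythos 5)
The statement you have been asked to prove is Conjecture \ref{conjc}: the paper offers no proof of it at all, only the remark that it would generalise Theorem \ref{thm2}, so there is no proof in the paper to compare yours against. Judged on its own terms, your proposal is a programme rather than a proof. The parts you do carry out are sound: the ``if'' direction follows by composing the embedding $G\hookrightarrow G_0$ given by the Oort property with the injection $\phi\colon G_0\hookrightarrow G$ and invoking finiteness; the prime-to-$p$ case is indeed settled by Lemma \ref{rem2}; since $(2n,p)=1$ forces $p\nmid |N|$, Theorem \ref{thm} does confine $G/N$ to $\mathbb{Z}/p\mathbb{Z}$, $D_p$, $\mathcal{A}_4$, $\mathcal{S}_4$, $\mathcal{A}_5$, whence the Sylow $p$-subgroup of $G$ has order exactly $p$; and via the third and fourth bullets of Lemma \ref{rem2} together with Schur--Zassenhaus, everything correctly reduces to excluding subgroups $\mathbb{Z}/p\mathbb{Z}\rtimes\mathbb{Z}/m\mathbb{Z}$ whose action is neither trivial nor inversion with $m=2$.

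But the step that carries all the content --- showing that a liftable $G$ cannot contain such a forbidden semidirect product --- is exactly where you stop. ``Run the good-reduction argument of Theorem \ref{thm2} case by case \dots\ this rigidity should rule out exactly the parametrised families'' is a hope, not an argument: for general $n$ one must extract from Sanjeewa's classification the analogues of Shaska's explicit lists (the normal forms $y^n=f(x)$, their parameters $\lambda_i$, their reductions modulo $p$, and which group $G_0$ each specialisation realises), and then actually verify that genus preservation under good reduction kills every family whose extension class $1\to N\to G\to G/N\to 1$ produces a non-Oort cyclic-by-$p$ subgroup. Nothing in your text does this, and nothing in it shows that the needed dichotomy (good reduction compatible with a nontrivial action on $N$ only in the cyclic and dihedral cases) actually holds --- you say yourself in the final paragraph that this is the step you expect to resist a short argument. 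So the proposal has a genuine gap, and that gap is precisely the open content of the statement, which is why the paper records it as a conjecture rather than a theorem.
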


Note that if Conjecture \ref{conjc} is true, using the results of Sanjeewa in \cite{sanj}, we have a complete list of all liftable automorphism groups of cyclic curves with the same hypothesis as in the conjecture. In the case when $p$ divides $n,$ we might have an automorphism group which is liftable to characteristic $0$ but not an Oort group for $k.$

\end{document}